\newtheorem{corollary}{Corollary}
\newtheorem{lemma}{Lemma}
\newtheorem{proposition}{Proposition}
\newtheorem{theorem}{Theorem}
\newtheorem{problem}{Problem}
\theoremstyle{definition}
\newtheorem{example}{Example}
\theoremstyle{remark}
\newtheorem{remark}{Remark}
\DeclareMathOperator{\Aut}{Aut}
\DeclareMathOperator{\id}{id}
\begin{document}

\title{Non-associative versions of Hilbert's basis theorem}
\author{Per B\"ack}
\address[Per B\"ack]{Division of Mathematics and Physics, Box 883, SE-721  23  V\"aster\r{a}s, Sweden}
\email[corresponding author]{per.back@mdu.se}

\author{Johan Richter}
\address[Johan Richter]{Department of Mathematics and Natural Sciences, Blekinge Institute of Technology, SE-371 79 Karlskrona, Sweden}
\email{johan.richter@bth.se}

\subjclass[2020]{16S35, 16S36, 16W50, 16W70, 17A99, 17D99}
\keywords{non-associative Hilbert's basis theorem, non-associative Ore extensions, non-associative skew Laurent polynomial rings, non-associative skew Laurent series rings, non-associative skew power series rings}

\begin{abstract}
We prove several new versions of Hilbert's basis theorem for non-associative Ore extensions, non-associative skew Laurent polynomial rings, non-associative skew power series rings, and non-associative skew Laurent series rings. For non-associative skew Laurent polynomial rings, we show that both a left and a right version of Hilbert's basis theorem hold. For non-associative Ore extensions, we show that a right version holds, but give a counterexample to a left version; a difference that does not appear in the associative setting.
\end{abstract}

\maketitle

\section{Introduction}
In commutative algebra and algebraic geometry, the classical Hilbert's basis theorem, which says that if $R$ is a unital, associative, commutative, and Noetherian ring, then $R[X]$ is also Noetherian, is of fundamental importance. This theorem can be generalized as follows: if $R$ is a unital, associative, left (right) Noetherian ring together with a ring automorphism $\sigma$ and $\sigma$-derivation $\delta$, then the Ore extension $R[X;\sigma,\delta]$, the Laurent polynomial ring $R[X^\pm;\sigma]$, the skew power series ring $R[[X;\sigma]]$, and the skew Laurent series ring $R((X;\sigma))$ are also left (right) Noetherian (many proofs can be found in the textbook \cite{GW04} by Goodearl and Warfield. Alternatively, see the proofs in this article). In \cite{BR22}, Hilbert's basis theorem for Ore extensions was further extended by the present authors to the case when $R$ is non-associative or hom-associative. In this article, we investigate versions of Hilbert's basis theorem for all the above rings in the non-associative setting where $\sigma$ is an additive surjection or bijection respecting 1, that is, $\sigma(1)=1$, and $\delta$ is an additive map where $\delta(1)=0$. Non-associative versions of Ore extensions were introduced by Nystedt, {\"O}inert, and Richter in \cite{NOR18}, and the non-associative skew Laurent polynomial rings used in this article were introduced by the present authors in \cite{BR24}.

We prove several new versions of Hilbert's basis theorem for non-associative skew Laurent polynomial rings, non-associative skew Laurent series rings, non-associative skew power series rings, and non-associative Ore extensions, thus generalizing results from \cite{BR22}. Unlike for associative rings, there is no simple equivalence between the left and right Hilbert's basis theorem; in fact for non-associative Ore extensions, only a right version holds in general. In more detail, for non-associative skew Laurent polynomial rings, we prove both a left and a right version of Hilbert's basis theorem (\autoref{thm:right-hilbert}). For non-associative Ore extensions, we prove a right version of Hilbert's basis theorem (\autoref{thm:ore-right-hilbert}) that generalizes the right version of Hilbert's basis theorem for non-associative Ore extensions proved in \cite{BR22} by applying to a non-associative Ore extension $R[X;\sigma, \delta]$, where we only require $\sigma$ to be an additive surjection with $\sigma(1)=1$ and $\delta$ to be an additive map with $\delta(1)=0$. In \autoref{ex:not-left-noetherian}, we show that the left version of Hilbert's basis theorem for non-associative Ore extensions in \cite{BR22} cannot be similarly generalized.   

Lastly, we show that under certain conditions, one can also prove a Hilbert's basis theorem for non-associative generalizations of skew power series rings (\autoref{thm:power-hilbert}) and skew Laurent series rings (\autoref{thm:skew-series-hilbert}).

The article is organized as follows:

In \autoref{sec:prel}, we provide conventions and preliminaries from non-associative ring theory (\autoref{subsec:non-assoc-ring-theory}). We also recall what skew Laurent polynomial rings and Ore extensions are (\autoref{subsec:skew-laurent-ore}), and how the definition of these rings can be extended to the non-associative setting.

In \autoref{sec:hilbert-basis-theorem}, we prove the above mentioned results on Hilbert's basis theorem: in \autoref{subsec:non-assoc-skew-laurent} for non-associative skew Laurent polynomial rings, in \autoref{subsec:hilbert-non-assoc-ore} for non-associative Ore extensions, and in \autoref{subsec:non-assoc-skew-power-laurent} for non-associative generalizations of both power series rings and skew Laurent series rings. 
\section{Preliminaries}\label{sec:prel}

\subsection{Non-associative ring theory}\label{subsec:non-assoc-ring-theory}
We denote by $\mathbb{N}$ the natural numbers, including zero. By a \emph{non-associative ring}, we mean a unital ring which is not necessarily associative. If $R$ is a non-associative ring, by a \emph{left $R$-module}, we mean an additive group $M$ equipped with a biadditive map $R\times M\to M$, $(r,m)\mapsto rm$ for any $r\in R$ and $m\in M$. A subset $B$ of $M$ is a basis if for any $m\in M$, there are unique $r_b\in R$ for $b\in B$, such that $r_b=0$ for all but finitely many $b\in B$, and $m=\sum_{b\in B}r_bb$. A left $R$-module that has a basis is called \emph{free}.

For a non-associative ring $R$, the \emph{associator} is the function $(\cdot,\cdot,\cdot)\colon R\times R\times R\to R$ defined by $(r,s,t)=(rs)t-r(st)$ for all $r,s,t\in R$. Using the associator we define three sets: $N_l(R)\colonequals \{r\in R\colon (r,s,t)=0 \text{ for all } s,t\in R\}$, $N_m(R)\colonequals \{s\in R\colon (r,s,t)=0 \text{ for all } r,t\in R\}$, and $N_r(R)\colonequals \{t\in R\colon (r,s,t)=0 \text{ for all } r,s\in R\}$. From the so-called associator identity 
\begin{equation*}
u(r,s,t)+(u,r,s)t+(u,rs,t)=(ur,s,t)+(u,r,st)
\end{equation*}
which holds for all $r,s,t,u\in R$, it follows that $N_l(R)$, $N_m(R)$, and $N_r(R)$ are all associative subrings of $R$. We also define $N(R)\colonequals N_l(R)\cap N_m(R)\cap N_r(R)$. 

If $R$ is a non-associative ring, recall that an element $u\in R$ is said to be left (right) invertible if there is $v\in R$ ($w\in R$) such that $vu=1$ ($uw=1$); in that case $v$ (or $w$) is called a left (or right) inverse of $u$. We let $R^\times$ denote the set of elements of $R$ that are both left and right invertible.

\begin{remark}\label{rem:unique-inverse}
Suppose $u\in N_m(R)\cap R^\times$. It is easy to show that $u$ has a unique left inverse $v$, $u$ has a unique right inverse $w$, and $v=w$. We let $u^{-1}$ denote the element $v=w$.
\end{remark}

By a non-associative ring $R$ being left (right) Noetherian, we mean that $R$ satisfies the ascending chain condition on left (right) ideals. In \cite{BR22}, we show that this is equivalent to all left (right) ideals being finitely generated and that the non-associative module theory parallels the associative case.

\subsection{Ore extensions and skew Laurent polynomial rings }\label{subsec:skew-laurent-ore}
Let us recall the definitions of (associative) Ore extensions and skew Laurent polynomial rings.

Let $R$ be an associative ring. We want to equip the ordinary polynomial ring $R[X]$ with a new multiplication satisfying $XR\subseteq RX+R$. This implies the existence of additive maps $\sigma,\delta\colon R\to R$ such that $Xr=\sigma(r)X+\delta(r)$ for any $r\in R$. The full multiplication is given by the biadditive extension of the relations
\begin{equation}
\left(rX^m\right)\left(sX^n\right)=\sum_{i\in\mathbb{N}}(r\pi_i^m(s))X^{i+n}\label{eq:ore-mult}
\end{equation}
for any $r,s\in R$ and $m,n\in\mathbb{N}$. Here $\pi_i^m$ denotes the sum of all $\binom{m}{i}$ possible compositions of $i$ copies of $\sigma$ and $m-i$ copies of $\delta$, where $\pi_i^m(s)\colonequals 0$ whenever $m<i$. For instance, $\pi_1^3=\sigma\circ\delta\circ\delta+\delta\circ\sigma\circ\delta+\delta\circ\delta\circ\sigma$. For the resulting structure to be an associative ring, it is necessary and sufficient that $\sigma$ is an endomorphism of $R$ and $\delta$ is a \emph{$\sigma$-derivation}, i.e. an additive map satisfying
\begin{equation*}
\delta(rs)=\sigma(r)\delta(s)+\delta(r)s 
\end{equation*}
for any $r,s\in R$. The resulting associative ring is denoted by $R[X;\sigma,\delta]$ and called an \emph{Ore extension} of $R$.

Similarly, we can equip the ordinary Laurent polynomial ring $R[X^\pm]$ with a new associative multiplication satisfying $XR=RX$ by using a ring automorphism $\sigma$. The multiplication is then defined by the biadditive extension of the relations
\begin{equation}
\left(rX^m\right)\left(sX^n\right)=(r\sigma^m(s))X^{m+n}\label{eq:laurent-mult}
\end{equation}	
for any $r,s\in R$ and $m,n\in\mathbb{Z}$. The resulting associative ring is denoted by $R[X^\pm;\sigma]$ and called a \emph{skew Laurent polynomial ring} over $R$. 

\subsection{Non-associative Ore extensions and non-associative skew Laurent polynomial rings}\label{subsec:non-assoc-ore}
In \cite{NOR18}, it was noted that the product \eqref{eq:ore-mult} gives a non-associative ring extension for any non-associative ring $R$ and any two additive maps $\sigma\colon R\to R$ and $\delta\colon R\to R$ satisfying $\sigma(1)=1$ and $\delta(1)=0$. 

To be precise, let $R$ be a non-associative ring. We denote by $R[X]$ the set of formal sums $\sum_{i\in \mathbb{N}}r_iX^i$ where $r_i\in R$ is zero for all but finitely many $i\in \mathbb{N}$, equipped with pointwise addition. Now, let $\sigma$ and $\delta$ be additive maps on $R$ satisfying $\sigma(1)=1$ and $\delta(1)=0$. The \emph{non-associative Ore extension} $R[X;\sigma, \delta]$ of $R$ is defined as the additive group $R[X]$ with multiplication defined by \eqref{eq:ore-mult}. One readily verifies that this makes $R[X;\sigma,\delta]$ a non-associative ring. It is associative if and only if $R$ is associative $\sigma$ is an endomorphism, and $\delta$ is a $\sigma$-derivation. It follows from results in \cite{NOR18} that $X^n\in N_m(R[X;\sigma,\delta])\cap N_r(R[X;\sigma,\delta])$ for any $n\in\mathbb{N}$.

\begin{example}\label{ex:weyl-algebra}
Let $T$ be a non-associative ring and let $R=T[Y]$. If $\delta\colon R\to R$ is a $T$-linear map where $\delta(1)=0$, then the non-associative Ore extension $R[X;\id_R, \delta]$ is called a \emph{non-associative Weyl algebra} in \cite{NOR18}.
\end{example}

Non-associative skew Laurent polynomial rings are defined in an analogous fashion to how non-associative Ore extensions are defined. 

Let $R$ be a non-associative ring. We denote by $R[X^\pm]$ the set of formal sums $\sum_{i\in \mathbb{Z}}r_iX^i$ where $r_i\in R$ is zero for all but finitely many $i\in \mathbb{Z}$, equipped with pointwise addition. Now, let $\sigma$ be an additive bijection on $R$ respecting $1$. The \emph{non-associative skew Laurent polynomial ring} $R[X;\sigma, \delta]$ over $R$ is defined as the additive group $R[X^\pm]$ with multiplication defined by \eqref{eq:laurent-mult}. One readily verifies that this makes $R[X^\pm;\sigma]$ a non-associative ring. It is associative if and only if $R$ is associative and $\sigma$ is a ring automorphism. By Proposition 13 in \cite{BR24}, $X^n\in N_m(R[X^\pm;\sigma)\cap N_r(R[X^\pm;\sigma])$ for any $n\in\mathbb{Z}$.

\begin{example}\label{ex:complex-skew-laurent}
	On the ring $\mathbb{C}$ we can define $\sigma_q(a+bi)= a+qbi$ for any $a,b\in \mathbb{R}$ and $q\in\mathbb{R}^\times$. Then $\sigma$ is an additive bijection that respects $1$, and  we can accordingly define $\mathbb{C}[X^{\pm}; \sigma_q]$. Moreover, $\sigma_q$ is a ring automorphism if and only if $q=\pm 1$, and so $\mathbb{C}[X^{\pm};\sigma_q]$ is associative if and only if $q=\pm 1$.
\end{example}

\begin{example}\label{ex:quantum-torus}   
Let $T$ be a non-associative ring, $q\in Z(T)^\times$, and $R=T[Y]$. Since $Z(T)$ is associative, $q$ has a unique two-sided multiplicative inverse. Define a ring automorphism $\sigma_q\colon R\to R$ by the $T$-algebra extension of the relation $\sigma_q(Y)=qY$. The \emph{non-associative quantum torus} over $T$ is the skew Laurent polynomial ring $R[X^\pm;\sigma_q]$. It is associative if and only if $T$ is associative.
\end{example}

\begin{example}\label{ex:cayley-staralgebra}
Let $A$ be any of the real, non-associative Cayley--Dickson algebras $\mathbb{R}, \mathbb{C}, \mathbb{H}, \ldots$ with the ring anti-automorphism $*$ given by the conjugation map. Then $*$ is a ring automorphism on $A$ if and only if $A=\mathbb{R}$ or $\mathbb{C}$ if and only if $A[X^\pm;*]$ is associative.
\end{example}

\section{Hilbert's basis theorem}\label{sec:hilbert-basis-theorem}
In this section, we extend Hilbert's basis theorem to non-associative settings.

\subsection{Hilbert's basis theorem for non-associative skew Laurent polynomial rings}\label{subsec:non-assoc-skew-laurent}
\begin{theorem}\label{thm:right-hilbert}
Let $R$ be a non-associative ring with an additive bijection $\sigma$ that respects $1$. If $R$ is left (right) Noetherian, then so is $R[X^\pm;\sigma]$.
\end{theorem}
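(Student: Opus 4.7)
The plan is to reduce the theorem to Hilbert's basis theorem for non-associative Ore extensions applied to the non-associative subring $R_+ := R[X;\sigma,0]$ of $S := R[X^\pm;\sigma]$. The embedding is a ring homomorphism since the Ore and Laurent multiplication formulas agree on non-negative-degree monomials when $\delta = 0$. For any one-sided ideal $I$ of $S$, I set $J := I \cap R_+$, which is a one-sided ideal of $R_+$ of the corresponding type. The key observation is that every $p \in I$ can be shifted into $R_+$: if $b$ denotes the bottom degree of $p$, then $p \cdot X^{-b}$ (right case) or $X^{-b}\cdot p$ (left case) has support in non-negative degrees and hence lies in $J$.

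Assuming $R_+$ is right (resp.\ left) Noetherian, $J$ is finitely generated, say by $p_1, \ldots, p_k$, and I will lift these to generators of $I$ over $S$. In the right case, the identity $p = (p \cdot X^{-b}) \cdot X^{b}$ holds because $X^{b} \in N_r(S)$ forces the associator $(p, X^{-b}, X^{b})$ to vanish; writing $p \cdot X^{-b}$ as a right $R_+$-expression in the $p_i$ and using $X^{b} \in N_r(S)$ once more to absorb $X^{b}$ into the outermost factor of each summand puts $p$ into the right $S$-ideal generated by $\{p_1, \ldots, p_k\}$. In the left case, $p = X^{b}\cdot(X^{-b}\cdot p)$ holds because $X^{-b} \in N_m(S)$ makes $(X^b, X^{-b}, p)$ vanish; since a left $S$-ideal is automatically closed under left multiplication by $X^{b} \in S$, $p$ lies in the left $S$-ideal generated by $\{p_1, \ldots, p_k\}$ with no further associator manipulations needed.

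The main obstacle is ensuring the required Hilbert basis theorems for $R_+$ are available. For the right case, \autoref{thm:ore-right-hilbert} specialized to $\delta = 0$ applies, since $\sigma$ being additively bijective is in particular an additive surjection with $\sigma(1) = 1$. For the left case, however, \autoref{ex:not-left-noetherian} shows that a left Hilbert basis theorem for non-associative Ore extensions fails under the weaker assumption that $\sigma$ is merely an additive surjection; one must instead invoke the left version of Hilbert's basis theorem for non-associative Ore extensions from \cite{BR22}, whose hypotheses are satisfied here precisely because $\sigma$ is additively bijective. The bijectivity of $\sigma$ in our Laurent setting is thus exactly the ingredient that makes the left case succeed, and verifying that it meshes with the hypotheses of the left Ore theorem in \cite{BR22} is the delicate point of the proof.
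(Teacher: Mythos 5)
Your right-case argument is sound and is in fact the route the paper itself endorses after \autoref{prop:skew-ideals}: writing $I=(I\cap T)S$ with $T=R[X;\sigma,0]$ and invoking \autoref{thm:ore-right-hilbert} does prove the right version. The left case, however, has a fatal gap. You assert that \autoref{ex:not-left-noetherian} only rules out a left Ore--Hilbert theorem when $\sigma$ is ``merely an additive surjection,'' and that the bijectivity of $\sigma$ lets you fall back on the left version from \cite{BR22}. But the map $\sigma$ constructed in \autoref{ex:not-left-noetherian} \emph{is} an additive bijection respecting $1$, $R=K[Y,Z]$ is Noetherian, and yet $R[X;\sigma,0]$ is not left Noetherian. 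So the intermediate ring $R_+$ on which your whole reduction rests can fail to be left Noetherian under exactly the hypotheses of the theorem. The left theorem of \cite{BR22} does not rescue this: it requires $\sigma$ to be multiplicative (an automorphism), not just an additive bijection, and that is precisely the hypothesis being dropped in this paper.

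Consequently the left half of the statement cannot be obtained by passing through $R[X;\sigma,0]$ at all; the Laurent ring is left Noetherian even though this subring need not be. The paper instead proves the theorem directly, adapting Bell's argument: for a left ideal $I$ of $S$ one defines $l_n(I)\subseteq R$ as the set of degree-$0$ coefficients of elements of $I$ supported in degrees $-n+1,\dots,0$, shows that $J\subseteq I$ with $l_n(J)=l_n(I)$ for all $n$ forces $J=I$ (using that the powers of $X$ lie in the middle and right nuclei, so elements of an ideal can be shifted by $X^{\pm 1}$), and then applies the ascending chain condition in $R$ to the doubly indexed family $l_j(I_i)$. As written, your proposal only establishes the right version; to salvage the left case you would need either this direct argument or a different intermediate object.
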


\begin{proof}
The following proof is a minor adaptation of the proof of Proposition 2.5 in \cite{Bel87}. 

Let $R$ be left Noetherian. For a left ideal $I$ of $S\colonequals R[X^\pm;\sigma]$ and a positive integer $n$, define
\begin{equation*}
l_n(I)=\left\{r_0\in R\colon \sum_{i={-n+1}}^{0} r_iX^i\in I \text{ for some } r_i\in R \text{ where } -n+1\leq i < 0\right\}.	
\end{equation*}
It is clear that each $l_n(I)$ is a left ideal of $R$ and that $l_1(I)\subseteq l_2(I)\subseteq\cdots$.

Suppose that $I$ and $J$ are left ideals of $S$ with $J\subseteq I$ and $l_n(I)=l_n(J)$ for each positive integer $n$: we claim that $I=J$. If not, then there must be an element $s=\sum_{i=a}^bs_iX^i$ (with $s_i\in R$) in $I\backslash J$ with $b-a$ as small as possible. Since $X^b(X^{-b}s)=s$, it is not true that $X^{-b}s$ belongs to $J$, but $X^{-b}s$ belongs to $I$, so we may assume $b=0$. Thus $s=\sum_{i=a}^{0}s_iX^i$, and so $s_0\in l_{1-a}(I)=l_{1-a}(J)$. This means there is an element $t=s_0+\sum_{i=a}^{-1}\tilde{s}_i\in J\subseteq I$. Hence $X(s-t)=\sum_{i=a+1}^0u_iX^i$ for some $u_i\in R$. By the assumption on $s$, we get that $X(s-t)\in J$. Hence $Xs\in J$, and so $X^{-1}(Xs)=s\in J$, which is a contradiction. Thus $I=J$.

Now suppose that $I_1\subseteq I_2\subseteq\cdots$ is a chain of left ideals of $S$. Clearly $l_1(I_1)\subseteq l_2(I_2)\subseteq\cdots$, so by the left Noetherianness of $R$, there is a $k$ such that $l_k(I_k)=l_n(I_n)$ for all $n\geq k$. It is clear that in fact $l_k(I_k)= l_n(I_m)$ for all $n, m\geq k$. Consider the chains $l_j(I_1)\subseteq l_j(I_2)\subseteq\cdots$, for $1\leq j\leq k-1$. Again by Noetherianness, there is an $n$, which we may choose bigger than $k$, such that $l_j(I_n)=l_j(I_m)$ for $m\geq n$ and all $j$ with $1\leq j\leq k-1$. But this equality already holds for $j\geq k$, so in fact $l_j(I_n)=l_j(I_m)$ for all $m\geq n$ and all $j$, and so by the previous paragraph, $I_n=I_m$ for all $m\geq n$. This shows that $S$ is left Noetherian. 

The right case is similar.
\end{proof}

\begin{example}
If $R$ is an associative, commutative, Noetherian ring, then the matrix ring $M_n(R)$ is Noetherian for any $n\in\mathbb{N}_{>0}$ (see e.g. Proposition 1.6 in \cite{GW04}). Hence $M_n(R)[X^\pm;\sigma]$ where $\sigma$ is the ring anti-automorphism given by the transpose operation, is Noetherian.	
\end{example}

\begin{remark}
Any non-associative division ring is Noetherian, and so by \autoref{thm:right-hilbert}, the non-associative skew Laurent polynomial rings in \autoref{ex:complex-skew-laurent} and in \autoref{ex:cayley-staralgebra} whenever $A=\mathbb{R}, \mathbb{C}, \mathbb{H}$ or $\mathbb{O}$, are Noetherian. 
\end{remark}

\begin{remark}
By \autoref{thm:right-hilbert}, the non-associative quantum torus $T[Y^\pm][X^\pm;\sigma_q]$ in \autoref{ex:quantum-torus} is left (right) Noetherian if $T$ is left (right) Noetherian.
\end{remark}

By \autoref{thm:right-hilbert}, it is immediate that if $R$ is left (right) Noetherian, then so are all iterated non-associative skew Laurent polynomial rings of $R$ where all the $\sigma$s are additive bijections respecting $1$. Now, if $\sigma_1,\ldots,\sigma_n$ are pairwise commuting additive bijections of $R$ respecting $1$, then we may construct an iterated non-associative skew Laurent polynomial ring of $R$ as follows (see e.g. Exercise 1W in \cite{GW04} for the associative case, which is nearly identical). First, we set $S_1\colonequals R[X_1^\pm;\sigma_1]$. Then $\sigma_2$ extends to an additive bijection $\widehat{\sigma}_2$ on $S_1$ respecting $1$, defined by $\widehat{\sigma}_2(rX_1^m)=\sigma_2(r)X_1^m$ for any $m\in\mathbb{Z}$. Next, we set $S_2\colonequals S_1[X_2^\pm;\widehat\sigma_2]$. Once $S_i$ has been constructed for some $i<n$, we construct $S_{i+1}\colonequals S_i[X_{i+1}^\pm;\widehat{\sigma}_{i+1}]$ where $\widehat{\sigma}_{i+1}$ is the additive bijection on $S_i$ defined by $\widehat{\sigma}_{i+1}(rX_1^{m_1}\cdots X_n^{m_n})=\sigma_{i+1}(r)X_1^{m_1}\cdots X_n^{m_n}$. We denote by $R[X_1^\pm,\ldots,X_n^\pm;\sigma_1,\ldots,\sigma_n]$ the resulting iterated non-associative skew Laurent polynomial ring $R[X_1^\pm;\sigma_1]\cdots[X_n^\pm;\widehat{\sigma}_n]$. This is a generalization of the non-associative skew Laurent polynomial rings defined in \cite{NO20}; their construction corresponds exactly to the case when $\sigma_1,\ldots,\sigma_n$ are ring automorphisms.

\begin{corollary}\label{cor:commuting-multi-hilbert}
Let $R$ be a non-associative ring with pairwise commuting additive bijections $\sigma_1,\ldots,\sigma_n$ respecting $1$. If $R$ is left (right) Noetherian, then so is $R[X_1^\pm,\ldots,X_n^\pm;\sigma_1,\ldots,\sigma_n]$.
\end{corollary}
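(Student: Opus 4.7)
The plan is to proceed by induction on $n$, using \autoref{thm:right-hilbert} at each step. The base case $n=1$ is precisely \autoref{thm:right-hilbert} applied to $R$ with the additive bijection $\sigma_1$.

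For the inductive step, I would assume that $S_i \colonequals R[X_1^\pm, \ldots, X_i^\pm; \sigma_1, \ldots, \sigma_i]$ is left (right) Noetherian. By the construction recalled just before the corollary, one has $S_{i+1} = S_i[X_{i+1}^\pm; \widehat{\sigma}_{i+1}]$, so it suffices to verify that $\widehat{\sigma}_{i+1}$ is an additive bijection on $S_i$ respecting $1$ and then invoke \autoref{thm:right-hilbert}. Since $S_i$ is free as a left $R$-module with basis the monomials $X_1^{m_1}\cdots X_i^{m_i}$ for $(m_1,\ldots,m_i)\in\mathbb{Z}^i$, additivity of $\widehat{\sigma}_{i+1}$ is immediate from its defining formula, bijectivity follows by exhibiting the analogous additive extension of $\sigma_{i+1}^{-1}$ as its two-sided inverse, and $\widehat{\sigma}_{i+1}(1) = \sigma_{i+1}(1) = 1$.

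Applying \autoref{thm:right-hilbert} to $S_i$ together with the additive bijection $\widehat{\sigma}_{i+1}$ then yields that $S_{i+1}$ is left (right) Noetherian, which closes the induction.

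There is essentially no obstacle in this argument: the substance is carried entirely by \autoref{thm:right-hilbert}, and the pairwise commuting hypothesis on the $\sigma_i$'s enters only in making the iterated construction coherent (so that the roles of the different $\widehat{\sigma}_j$'s do not conflict), not in the Noetherian step itself. If anything, the only point requiring care is the bookkeeping verification that each extension $\widehat{\sigma}_{i+1}$ really is an additive bijection on the previously built ring $S_i$, which is routine given the free-module description of $S_i$.
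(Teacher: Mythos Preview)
Your proposal is correct and matches the paper's approach: the paper gives no formal proof of this corollary, but states just before it that the result is immediate from \autoref{thm:right-hilbert} applied iteratively, after spelling out the construction of the $\widehat{\sigma}_{i+1}$ and the rings $S_{i+1}=S_i[X_{i+1}^\pm;\widehat{\sigma}_{i+1}]$. Your inductive argument is precisely this iteration made explicit, with the routine verification that each $\widehat{\sigma}_{i+1}$ is an additive bijection respecting $1$.
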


\subsection{Hilbert's basis theorem for non-associative Ore extensions}\label{subsec:hilbert-non-assoc-ore}
We will now see to what extent Hilbert's basis theorem can be extended to non-associative Ore extensions (see \autoref{subsec:non-assoc-ore}).  

\begin{lemma}\label{lem:ore-module}
Let $R$ be a non-associative ring with an additive surjection $\sigma$ that respects $1$ and an additive map $\delta$ where $\delta(1)=0$. Then, for any $n\in\mathbb{N}$, $\sum_{i=0}^n X^iR=\sum_{i=0}^n RX^i$ as right $R$-modules in $R[X;\sigma,\delta]$.
\end{lemma}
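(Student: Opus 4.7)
The plan is to prove the claimed equality by establishing both inclusions. Before that, I would briefly verify that both sides are genuine right $R$-submodules of $R[X;\sigma,\delta]$. Closure of $\sum_{i=0}^n X^iR$ under right multiplication by $R$ follows from $X^i \in N_m(R[X;\sigma,\delta])$, since then $(X^ir)s = X^i(rs) \in X^iR$. Closure of $\sum_{i=0}^n RX^i$ uses that $X^i \in N_m$ and $X^j \in N_r$ for $0 \le j \le i$ to rewrite
\[
(rX^i)s = r(X^is) = \sum_{j=0}^i r(\pi_j^i(s)X^j) = \sum_{j=0}^i (r\pi_j^i(s))X^j \in \sum_{j=0}^i RX^j.
\]

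For the forward inclusion $\sum_{i=0}^n X^iR \subseteq \sum_{i=0}^n RX^i$, I would just expand using the multiplication formula \eqref{eq:ore-mult}: for any $r \in R$ and $0 \le i \le n$,
\[
X^i r = \sum_{j=0}^i \pi_j^i(r)X^j \in \sum_{j=0}^n RX^j.
\]
Note this step uses neither surjectivity of $\sigma$ nor any property of $\delta$ beyond additivity.

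For the reverse inclusion I would argue by induction on $n$, establishing the stronger statement $RX^n \subseteq \sum_{i=0}^n X^iR$. The base case $n=0$ is immediate. For the inductive step, the hypothesis that $\sigma$, and hence $\sigma^n$, is surjective is essential: given $r \in R$, pick $s \in R$ with $\sigma^n(s) = r$. Since $\pi_n^n = \sigma^n$, the multiplication formula gives
\[
X^n s = \sum_{j=0}^n \pi_j^n(s)X^j = rX^n + \sum_{j=0}^{n-1}\pi_j^n(s)X^j,
\]
so $rX^n = X^n s - \sum_{j=0}^{n-1}\pi_j^n(s)X^j$ lies in $X^n R + \sum_{j=0}^{n-1}RX^j$. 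Applying the inductive hypothesis to each summand of the second term places $rX^n$ in $\sum_{i=0}^n X^iR$, which upon summing over $i$ yields $\sum_{i=0}^n RX^i \subseteq \sum_{i=0}^n X^iR$.

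The main obstacle is the reverse inclusion, and the surjectivity of $\sigma$ is unavoidable there: without it, one cannot realize an arbitrary leading coefficient $r$ as $\sigma^n(s)$ for some $s$, so one cannot cancel the leading term of $X^n s$ against $rX^n$ and reduce to lower-degree terms handled by the inductive hypothesis. The forward direction, by contrast, works for any additive $\sigma,\delta$ with $\sigma(1)=1$ and $\delta(1)=0$.
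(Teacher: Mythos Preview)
Your argument is essentially the same as the paper's: the forward inclusion is immediate from the multiplication rule, and the reverse inclusion is an induction that uses surjectivity of $\sigma^n$ to strip off the top coefficient. The paper phrases the induction on the whole sum $\sum_{i=0}^n RX^i$ rather than on the single summand $RX^n$, but this is cosmetic.

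One slip worth flagging: your justification that $\sum_{i=0}^n X^iR$ is closed under right multiplication by $R$ is not valid as stated. You invoke $X^i\in N_m$ to conclude $(X^ir)s=X^i(rs)$, but membership in $N_m$ controls associators with $X^i$ in the \emph{middle} slot, i.e.\ $(aX^i)c=a(X^ic)$; what you would need is $X^i\in N_l$, and in this generality $X$ is \emph{not} in $N_l$ (indeed $(X,r,s)=0$ for all $r,s\in R$ forces $\sigma$ to be multiplicative and $\delta$ to be a $\sigma$-derivation). Fortunately this does not damage the proof: your verification that $\sum_{i=0}^n RX^i$ is a right $R$-module is correct (there you use $X^i\in N_m$ and $X^j\in N_r$ in the right places), and once the \emph{set} equality is established, the module structure transfers. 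This is exactly how the paper handles it: it checks the module property only for $\sum RX^i$ and lets the set equality do the rest.
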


\begin{proof}
We show that as sets, $\sum_{i=0}^nX^iR=\sum_{i=0}^nRX^i$. Since $\sum_{i=0}^nRX^i$ is clearly closed under addition and multiplication from the right by an element from $R$, it is indeed a right $R$-module. We also see that $\sum_{i=0}^nX^iR\subseteq\sum_{i=0}^nRX^i$, so we only need to show that the other inclusion holds as well. We prove this by induction on $n$.

Base case ($n=0$): We must show that $RX^0\subseteq X^0R$. However, since $X^0=1$, this is immediate.

Induction step ($n+1$): Assume that $\sum_{i=0}^nRX^i\subseteq \sum_{i=0}^n X^iR$ and let $p\in \sum_{i=0}^{n+1}RX^i$. By definition, $p=rX^{n+1}+$ [lower order terms] for some $r\in R$. Since $\sigma$ is surjective, so is $\sigma^{n+1}$, and so there exists $r'\in R$ such that $\sigma^{n+1}(r')=r$. Then $p-X^{n+1}r'\in \sum_{i=0}^nRX^i\subseteq \sum_{i=0}^nX^iR$, so we must have $p\in\sum_{i=0}^{n+1}X^iR$. Hence $\sum_{i=0}^{n+1}RX^i\subseteq \sum_{i=0}^{n+1}X^iR$, and the induction step is done. 
\end{proof}

\begin{theorem}\label{thm:ore-right-hilbert} 
Let $R$ be a non-associative ring with an additive surjection $\sigma$ that respects $1$ and an additive map $\delta$ where $\delta(1)=0$. If $R$ is right Noetherian, then so is $R[X;\sigma, \delta]$.
\end{theorem}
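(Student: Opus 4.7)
The plan is to adapt the standard leading-coefficient proof of Hilbert's basis theorem to the non-associative setting. For a right ideal $I$ of $S \colonequals R[X;\sigma,\delta]$ and each $n \in \mathbb{N}$, I would define the leading-coefficient set $L_n(I) \subseteq R$ to consist of $0$ together with all $r \in R$ for which there exists an element of $I$ of the form $rX^n + q$ with $q \in S$ of degree less than $n$.

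First I would check that each $L_n(I)$ is a right ideal of $R$ and that $L_0(I) \subseteq L_1(I) \subseteq \cdots$. Closure of $L_n(I)$ under right multiplication by $s' \in R$ is where the surjectivity of $\sigma$ enters: given $r \in L_n(I)$ with witness $p \in I$, choose $s \in R$ with $\sigma^n(s) = s'$ (possible since $\sigma$, hence $\sigma^n$, is surjective), and the Ore multiplication~\eqref{eq:ore-mult} then shows that $ps \in I$ has leading term $r\sigma^n(s)X^n = rs'X^n$. The chain inclusion $L_n(I) \subseteq L_{n+1}(I)$ follows by considering $pX \in I$: the identities $\sigma(1)=1$ and $\delta(1)=0$ force $\pi_i^n(1)=0$ for $i<n$ and $\pi_n^n(1)=1$, so $(rX^n)X = rX^{n+1}$, and a short computation on the lower-order part of $p$ shows that $pX$ has leading coefficient $r$ at $X^{n+1}$.

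The key comparison step to establish is: if $I \subseteq J$ are right ideals of $S$ with $L_n(I) = L_n(J)$ for every $n$, then $I = J$. A minimal-degree element $p \in J \setminus I$ of degree $n$ with leading coefficient $r$ would give $r \in L_n(J) = L_n(I)$, hence an element $q \in I$ with the same leading term; then $p - q \in J \setminus I$ has strictly smaller degree, contradicting minimality.

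Finally, for an ascending chain $I_1 \subseteq I_2 \subseteq \cdots$ of right ideals of $S$, I would apply right-Noetherianness of $R$ to the right ideal $M \colonequals \bigcup_{n,k} L_n(I_k)$: its finitely many generators all lie in some $L_N(I_K)$, so $L_n(I_k) = M$ for all $n \geq N$ and $k \geq K$. For each $n < N$ the chain $\{L_n(I_k)\}_k$ stabilizes at some index $K_n$, and with $K' \colonequals \max(K, K_0, \ldots, K_{N-1})$ one obtains $L_n(I_{K'}) = L_n(I_k)$ for every $n$ and every $k \geq K'$; the comparison step then forces $I_{K'} = I_k$, establishing the ascending chain condition on right ideals of $S$. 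I expect the main obstacle to be precisely the closure of $L_n(I)$ under right multiplication, which is what requires surjectivity of $\sigma$ and explains why only a right-sided version of the theorem is available in this generality.
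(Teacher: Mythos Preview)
Your proof is correct, but it follows a genuinely different route from the paper's argument. The paper proves directly that every nonzero right ideal $I$ of $S$ is finitely generated: it works with a \emph{single} leading-coefficient ideal $J\subseteq R$ (leading coefficients in all degrees at once), chooses generators $r_1,\ldots,r_k$ of $J$ with witnesses $p_i\in I$, normalizes these to a common top degree $m$ by multiplying by powers of $X$, and then invokes \autoref{lem:ore-module} to see that $M=\sum_{i=0}^{m-1}RX^i=\sum_{i=0}^{m-1}X^iR$ is a finitely generated (hence Noetherian) right $R$-module, so that $I\cap M$ is finitely generated; the $p_iX^{m-m_i}$ together with generators of $I\cap M$ are then shown to generate $I$ by an induction on degree that requires a careful left-nested expression $\sum_{i,j}(\cdots((r_is_{ij1})s_{ij2})\cdots)s_{ijj}$ for elements of the non-associatively generated right ideal $J$. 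Your approach instead establishes the ascending chain condition directly using the degree-indexed family $\{L_n(I)\}$ and a comparison lemma; this is closer in spirit to the paper's proof of \autoref{thm:right-hilbert} and entirely bypasses both \autoref{lem:ore-module} and the non-associative bookkeeping in the induction step. What the paper's approach buys is an explicit finite generating set for each right ideal; what yours buys is a shorter argument with fewer auxiliary ingredients.
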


\begin{proof}
This proof is an adaptation of a proof in \cite{GW04} to our setting. We wish to show that any right ideal of $R[X;\sigma, \delta]$ is finitely generated. Since the zero ideal is finitely generated, it is sufficient to show that any non-zero right ideal $I$ of $R[X;\sigma, \delta]$ is finitely generated. Let $J$ consist of the zero element and all leading coefficients of polynomials in $I$, i.e. $J\colonequals \{r\in R\colon rX^d+ r_{d-1}X^{d-1}+\dots +r_0\in I, r_{d-1},\dots,r_0\in R\}$. We claim that $J$ is a right ideal of $R$. First, one readily verifies that $J$ is an additive subgroup of $R$. Now, let $r\in J$ and $s\in R$ be arbitrary. Then there is some polynomial $p=rX^d+[\text{lower order terms}]$ in $I$. Moreover, there exists $s'\in R$ such that $\sigma^d(s')=s$. Hence $ps'=(rX^d)s'+[\text{lower order terms}]=\left(r\sigma^d(s')\right)X^d+[\text{lower order terms}]=(rs)X^d+[\text{lower order terms}]$, which is an element of $I$ since $p$ is. Therefore, $rs\in J$, so $J$ is a right ideal of $R$. 
	
Since $R$ is right Noetherian and $J$ is a right ideal of $R$, $J$ is finitely generated, say by $\{r_1,\dots,r_k\}\subseteq J$. All the elements $r_1,\dots,r_k$ are assumed to be non-zero, and moreover, each of them is a leading coefficient of some polynomial $p_i\in I$ of degree $m_i$. Put $m=\max(m_1,\dots,m_k)$. Then each $r_i$ is the leading coefficient of $p_i X^{m-m_i}=r_i X^{m_i}\cdot X^{m-m_i}+[\text{lower order terms}]=r_iX^m+[\text{lower order terms}]$, which is an element of $I$ of degree $m$.
	
Let $M\colonequals \sum_{i=0}^{m-1} RX^i$. By \autoref{lem:ore-module}, $\sum_{i=0}^{m-1} RX^i = \sum_{i=0}^{m-1} X^iR$ as right $R$-modules. Hence $M$ is finitely generated, and any finitely generated right $R$-module is Noetherian. Now, since $I$ is a right ideal of the ring $R[X;\sigma,\delta]$ which contains $R$, in particular, it is also a right $R$-module. Hence $I\cap M$ is a submodule of $M$, and since $M$ is a Noetherian right $R$-module, $I\cap M$ is finitely generated, say by the set $\{q_1,\dots, q_t\}$.
	
Let $I_0$ be the right ideal of $R[X;\sigma,\delta]$ generated by $\left\{p_1X^{m-m_1},\dots,p_kX^{m-m_k},\right.$
\\$\left. q_1,\ldots,q_t\right\}$. Since all the elements in this set belong to $I$, we have that $I_0\subseteq I$. We claim that $I\subseteq I_0$. In order to prove this, pick any element $p'\in I$.
	
Base case ($\mathsf{P}(m)$): If $\deg p'<m$, $p'\in M=\sum_{i=0}^{m-1} RX^i$, so $p'\in I\cap M$. On the other hand, the generating set of $I\cap M$ is a subset of the generating set of $I_0$, so $I\cap M\subseteq I_0$, and therefore $p'\in I_0$. 
	
Induction step ($\forall n\geq m\ (\mathsf{P}(n)\rightarrow \mathsf{P}(n+1))$): Assume $\deg p'= n\geq m$ and that $I_0$ contains all elements of $I$ with $\deg < n$. We want to show that $I_0$ contains $p'$ as well. Let $r'$ be the leading coefficient of $p'$, so that we have $p'=r'X^n+[\text{lower order terms}]$. Since $p'\in I$ by assumption, $r'\in J$. We then claim that $r'=\sum_{i=1}^k\sum_{j=1}^{k'}(\cdots((r_is_{ij1})s_{ij2})\cdots)s_{ijj}$ for some $k'\in\mathbb{N}_{>0}$ and some $s_{ij1},\ldots, s_{ijj}\in R$. First, we note that since $J$ is generated by $\{r_1,\dots,r_k\}$, it is necessary that $J$ contains all elements of that form. Secondly, we see that subtracting any two such elements or multiplying any such element from the right with one from $R$ again yields such an element, and hence the set of all elements of this form is not only a right ideal containing $\{r_1,\dots,r_k\}$, but also the smallest such to do so. 
	
Recall that $p_iX^{m-m_i}=r_iX^m+[\text{lower order terms}]$. There exists $s'_{ij\ell}$ such that $\sigma^m(s'_{ij\ell})=s_{ij\ell}$, so $\left(p_i X^{m-m_i}\right)s'_{ij\ell}=(r_is_{ij\ell})X^m+[\text{lower order terms}]$. Set $c_{ij}\colonequals \left(\cdots\left(\left(\left(p_iX^{m-m_i}\right)s'_{ij1}\right)s'_{ij2}\right)\cdots\right)s'_{ijj}$. Since $p_iX^{m-m_i}$ is a generator of $I_0$, $c_{ij}$ is an element of $I_0$ as well, and therefore so is the element $q\colonequals \sum_{i=1}^k\sum_{j=1}^{k'} c_{ij}X^{n-m}=r'X^n + [\text{lower order terms}].$ However, as $I_0\subseteq I$, we also have $q\in I$, and since $p'\in I$, $(p'-q)\in I$. Now, $p'=r'X^n+[\text{lower order terms}]$, so $\deg(p'-q)<n$, and therefore $(p'-q)\in I_0$. This shows that $p'=(p'-q)+q$ is an element of $I_0$ as well, and thus $I=I_0$, which is finitely generated.
\end{proof}

\begin{remark}
By \autoref{thm:ore-right-hilbert}, the non-associative Weyl algebra $T[Y][X;\id_R,\delta]$ in \autoref{ex:quantum-torus} is right Noetherian if $T$ is right Noetherian. By Theorem 1 in \cite{BR22}, $T[Y][X;\id_R,\delta]$ is left Noetherian if $T$ is left Noetherian.
\end{remark}

\begin{example}
Let $f\colon \mathbb{N}\to \mathbb{N}$ be any surjection with $f(0)=0$. Suppose $K$ is a field and set $R=K[Y]$. Let $\sigma$ be the additive surjection on $R$ defined by $\sigma(rY^n)=rY^{f(n)}$ for any $r\in R$ and $n\in\mathbb{N}$, and let $\delta$ be the ordinary derivative on $R$. Then $\sigma$ respects $1$ and $\delta(1)=0$. By \autoref{thm:ore-right-hilbert}, $R[X;\sigma, \delta]$ is right Noetherian.
\end{example}

We can relate the ideals of a non-associative skew Laurent polynomial ring to a subring that is a non-associative Ore extension. 

\begin{proposition}\label{prop:skew-ideals}
Let $R$ be a non-associative ring with an additive bijection $\sigma$ that respects $1$. Set $S\colonequals R[X^\pm;\sigma]$ and $T\colonequals R[X;\sigma,0]$. If $I$ is a left ideal of $S$, then $I=S(I\cap T)$. If $I$ is a right ideal of $S$, then $I=(I\cap T)S$. 	
\end{proposition}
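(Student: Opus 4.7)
The plan is to prove each equality by two containments. The easy direction is immediate in both cases: since $I\cap T\subseteq I$ and $I$ is a left (respectively, right) ideal of $S$, we have $S(I\cap T)\subseteq I$ (respectively, $(I\cap T)S\subseteq I$). The substance of the proposition is the reverse containment, which I would obtain by showing that every $p\in I$ can be realised as a single product of an element of $I\cap T$ with a power of $X$.

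Given a nonzero $p=\sum_{i=a}^{b}r_iX^i\in I$, choose $c\in\mathbb{N}$ with $c\geq -a$. By \eqref{eq:laurent-mult}, both $X^c p=\sum_{i}\sigma^c(r_i)X^{i+c}$ and $pX^c=\sum_{i}r_iX^{i+c}$ lie in $T$, since every exponent $i+c$ is non-negative. If $I$ is a left ideal, then $X^cp\in I\cap T$; if $I$ is a right ideal, then $pX^c\in I\cap T$. To undo the shift, I would invoke the fact that $X^c\in N_m(S)$ for every $c\in\mathbb{Z}$ (Proposition 13 of \cite{BR24}, recalled in \autoref{subsec:non-assoc-ore}). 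Vanishing of the associator $(X^{-c},X^c,p)$ gives $X^{-c}(X^cp)=(X^{-c}X^c)p=p$, so $p\in S(I\cap T)$; vanishing of $(p,X^c,X^{-c})$ gives $(pX^c)X^{-c}=p(X^cX^{-c})=p$, so $p\in(I\cap T)S$.

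The only point at which things could in principle go wrong is the re-association across the pair $X^c$, $X^{-c}$, which fails in a general non-associative ring; but it is delivered at no cost by the nuclear position of powers of $X$ in $S$. So there is no real obstacle, and the proof amounts to a shift-and-shift-back argument once one notes where $X^c$ sits relative to the associator.
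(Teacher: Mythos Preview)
Your argument is correct and is essentially the same shift-and-unshift approach as the paper's proof: the paper writes $p=(pX^{-m})X^m$ with $m$ the lowest exponent (and says the left case is similar), while you take a possibly larger shift $c\geq -a$ and make explicit the appeal to $X^c\in N_m(S)$ that justifies the re-association. The extra care about the associator is a nice touch, but the underlying idea is identical.
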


\begin{proof}
Let $I$ be a right ideal of $S$. Then $I \cap T$ is a right ideal of $T$. We claim that $I= (I\cap T)S$. If $p\in I$ then $p=p_m X^m + \cdots + p_n X^n$ where $p_i\in R$ and $i,m,n$ are integers with $m \leq n$. Then $p=(pX^{-m})X^m$ and $pX^{-m} \in I \cap T$, so $p \in (I \cap T)S $. If $p\in (I\cap T)S$, then obviously $p\in I$.	

The left case is similar.
\end{proof}

\autoref{prop:skew-ideals} shows that if $R[X;\sigma,0]$ is left (right) Noetherian, then so is $R[X^\pm;\sigma]$. Combined with \autoref{thm:ore-right-hilbert}, this can be used to prove the right version of \autoref{thm:right-hilbert}. In the associative case, one can prove the left version of \autoref{thm:right-hilbert} similarly by using a left version of \autoref{thm:ore-right-hilbert}. However, in the following example, we show that a left version of \autoref{thm:ore-right-hilbert} is not true. 

\begin{example}\label{ex:not-left-noetherian}
Let $R=K[Y,Z]$ where $K$ is a field. Set $U\colonequals \{1,3,5,\ldots\}$ and let $V$ be the set $U\times\mathbb{N}$. Then there exist bijections $f\colon U\to\{1,2,3,\ldots\}$ and $g=(g_1,g_2)\colon \{2,4,6,\ldots\}\to V$. Define a map $\sigma$ on the monomials of $R$ as follows: $\sigma(1)=1$, $\sigma(Y^iZ^j)=Y^{2i}Z^j$ if $i>0$, $\sigma(Z^j)=Z^{f(j)}$ if $j$ is odd, and $\sigma(Z^j)=Y^{g_1(j)}Z^{g_2(j)}$ if $j$ is even. Extend $\sigma$ $K$-linearly to all polynomials in $R$. Then $\sigma$ is an additive bijection that respects $1$. Note that the ideal, $J$, of $R$ generated by $Y$ is mapped to the ideal generated by $Y^2$ by $\sigma$. Set $T\colonequals R[X;\sigma,0]$ and let $I=\{\sum_{i\in\mathbb{N}}r_iX^i\in T\colon r_i\in J\text{ for all } i\}$. Then $I$ is a left ideal of $T$. We claim that $I$ is not finitely generated. 

For suppose that $I$ is generated as a left ideal by $p_1,\ldots,p_n$ for some $n$. Let $m$ be the maximal degree in $X$ of $p_1,\ldots,p_n$. Then $YX^{m+1}$ is in the left ideal generated by these generators. Hence there are $s_i,t_{i,1}, t_{i,2},\ldots\in T$ such that $YX^{m+1}=\sum_{i=1}^ns_ip_i+\sum_{i=1}^nt_{i,1}(t_{i,2}p_i)+\cdots$. There must exist terms on the right of degree at least $m+1$. Note that if a term on the right has degree $m+1$, then its coefficients belong to the ideal generated by $Y^2$. This would mean that the coefficient on the left of degree $m+1$ also belongs to the ideal of $R$ generated by $Y^2$. This is a contradiction, so there cannot exist such a finite set of generators. 
\end{example}

\subsection{Hilbert's basis theorem for non-associative skew power series rings and non-associative Laurent series rings}\label{subsec:non-assoc-skew-power-laurent}
Let $R$ be a non-associative ring with an additive bijection $\sigma$ that respects $1$. We can define a \emph{non-associative skew power series ring} $R[[X;\sigma]]$ by simply equipping the set of formal power series $\sum_{i=0}^{\infty} r_i X^i$, where $r_i\in R$, with the usual pointwise addition and the multiplication defined by $\left(rX^m\right)\left(sX^n\right) = \left(r\sigma^{m}(s)\right)X^{m+n}$ for any $r,s\in R$ and $m,n\in\mathbb{N}$ (extended in the obvious way). In particular, this makes $R[[X;\sigma]]$ a unital, non-associative ring. 

We define the \emph{order} of a non-zero element of $R[[X;\sigma]]$ to be the least power of $X$ with a non-zero coefficient and that coefficient to be the \emph{leading coefficient}. Like usual in formal power series rings, we can define the value of an infinite series $\sum_{i=0}^{\infty} p_i$ as long as the order of the $p_i$s goes to infinity. 

\begin{theorem}\label{thm:power-hilbert} 
	Let $R$ be an associative ring with an additive surjection $\sigma$ that respects $1$. If $R$ is right Noetherian, then so is $R[[X;\sigma]]$.
\end{theorem}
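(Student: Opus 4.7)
The plan is to adapt the classical proof of Hilbert's basis theorem for associative skew power series rings to this setting. Let $I$ be a right ideal of $S\colonequals R[[X;\sigma]]$. For each $n \in \mathbb{N}$, I would set
\[
J_n\colonequals \left\{r\in R\colon rX^n+\textstyle\sum_{i>n}r_iX^i\in I\text{ for some }r_i\in R\right\}\cup\{0\},
\]
the leading coefficients of elements of $I$ of order at least $n$, together with $0$. First, I would verify that each $J_n$ is a right ideal of $R$: additive closure is immediate from pointwise addition, and closure under right multiplication by $s\in R$ uses surjectivity of $\sigma^n$ (inherited from $\sigma$): picking $s'\in R$ with $\sigma^n(s')=s$ and right-multiplying a witness for $r\in J_n$ by $s'$ produces a witness $rsX^n+[\text{higher}]\in I$ for $rs\in J_n$. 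Right multiplication by $X$ gives $J_n\subseteq J_{n+1}$. By the right-Noetherianness of $R$, this chain stabilizes at some $J_N$, and each $J_n$ with $0\leq n\leq N$ admits a finite generating set $r_{n,1},\ldots,r_{n,k_n}$; for each $(n,j)$ I would fix a corresponding witness $p_{n,j}\in I$ of the form $r_{n,j}X^n + [\text{higher}]$.

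The claim is then that the finite set $\{p_{n,j}\}$ generates $I$ as a right ideal. Given $q\in I$, I would inductively produce $q=q_0,q_1,q_2,\ldots$ in $I$ with strictly increasing orders $m_k\colonequals \operatorname{ord}(q_k)$ by subtracting at each stage an element $c_k$ in the right ideal generated by $\{p_{n,j}\}$ with the same leading term as $q_k$. When $m_k\leq N$, $c_k$ is a finite sum $\sum_j p_{m_k,j}\cdot s'_{k,j}$; when $m_k>N$, $c_k$ is a sum of terms $p_{N,j}\cdot(X^{m_k-N}s'_{k,j})$. Here the $s'_{k,j}\in R$ are chosen so that $\sigma^{m_k}(s'_{k,j})=s_{k,j}$, where the $s_{k,j}$ come from expressing the leading coefficient of $q_k$ via the generators of $J_{\min(m_k,N)}$. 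Direct computation of leading coefficients then guarantees $c_k\in I$ and $\operatorname{ord}(q_{k+1})>m_k$.

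Passing to the $X$-adic limit, $q=\sum_k c_k$; regrouping the right multipliers by generator, one should obtain $q=\sum_{n,j}p_{n,j}\cdot T_{n,j}$ for certain $T_{n,j}\in S$. Since the $m_k$ strictly increase, each $T_{n,j}$ with $n<N$ is a single term (or $0$), and $T_{N,j}=\sum_{k:m_k\geq N}X^{m_k-N}s'_{k,j}$ converges in $S$ since its summands have orders tending to infinity. This yields a finite expression for $q$ inside the right ideal generated by the finite set $\{p_{n,j}\}$. The main obstacle is justifying the regrouping step, specifically the identity $p_{N,j}\cdot T_{N,j}=\sum_k p_{N,j}\cdot(X^{m_k-N}s'_{k,j})$; this follows from distributivity together with the continuity of left multiplication by $p_{N,j}$ in the $X$-adic topology, which holds because left multiplication by any element of order $\geq n$ raises orders by at least $n$. (If $S$ should fail to be associative because $\sigma$ is not multiplicative, one may freely identify $(p_{N,j}X^{m_k-N})s'_{k,j}$ with $p_{N,j}(X^{m_k-N}s'_{k,j})$ using that $X^{m_k-N}\in N_m(S)$, a fact proved just as for the Ore extension and skew Laurent cases.)
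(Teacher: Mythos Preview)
Your proof is correct and follows essentially the same classical strategy as the paper: pass to leading-coefficient ideals in $R$, use right Noetherianity to extract finitely many witnesses in $I$, then iteratively subtract and regroup the infinite sum into finitely many products $p_{n,j}\cdot T_{n,j}$. The only organizational difference is that you work with the ascending chain $J_0\subseteq J_1\subseteq\cdots$ and its stabilization at $J_N$, whereas the paper uses a single leading-coefficient ideal $J$ together with a greedy choice of generators $p_1,\ldots,p_n$ of minimal order; both are standard variants, and your explicit remarks on $X$-adic continuity and $X^{m}\in N_m(S)$ make the regrouping step (which the paper leaves implicit) more transparent.
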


\begin{proof}
Let $R$ be a right Noetherian ring satisfying the conditions of the theorem. Denote the leading coefficient of an element $p\in S\colonequals R[[X;\sigma]]$ by $c(p)$. 
	
Let $I$ be an arbitrary non-zero right ideal of $S$. Let $J$ be a set consisting of $0$ and the leading coefficients of non-zero elements in $I$. It is not difficult to see that $J$ is a right ideal of $R$ and thus it is finitely generated. 

Let $p_1$ be a non-zero element of $I$, such that no non-zero element of $I$ has lower order. Define inductively $p_2, \ldots, p_n$ such that the $p_{i+1}$ has minimal order among all elements in $I$  such that $c(p_{i+1})$ does not lie in the right ideal generated by $c(p_1), \ldots , c(p_i)$. This process must stop after finitely many steps. We claim that $p_1, \ldots, p_n$ generate $I$. Let an element $q\in I$ be given. Clearly there is some combination $\sum_{i=1}^{n} p_ik_{1,i}$ where $k_{1,i}\in S$ and either $k_{1,i}=0$ or the order of $k_{1,i}$ equals the order of $q$, that has the same leading coefficient and the same order as $q$. Then $q'=q-\sum_{i=1}^{n} p_ik_{1,i}$ is an element of $I$ of higher order than $q$. We can then find $k_{2,1},\ldots, k_{2,n}\in S$ and either $k_{2,i}=0$ or the order of $k_{2,i}$ equals the order of $q'$, such that $q'-\sum_{i=1}^{n} p_ik_{2,i}$ is an element of $I$ of yet higher order. Continuing this process we can write $q = p_1\sum_{\ell=1}^{\infty} k_{\ell, 1}+\cdots+p_n\sum_{\ell=1}^{\infty} k_{\ell, n}$, showing that $q$ belongs to the right ideal generated by $p_1, \ldots, p_n$. 
\end{proof}

Similarly, one can define a non-associative skew Laurent series ring $R((X;\sigma))$.

\begin{theorem}\label{thm:skew-series-hilbert}
Let $R$ be an associative ring with an additive bijection $\sigma$ that respects $1$. If $R$ is right Noetherian, then so is $R((X;\sigma))$.
\end{theorem}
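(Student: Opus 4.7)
The plan is to parallel the strategy used in \autoref{prop:skew-ideals} and \autoref{thm:right-hilbert}, but with the polynomial pair $\bigl(R[X^\pm;\sigma], R[X;\sigma,0]\bigr)$ replaced by the series pair $(S, T)$, where $S \colonequals R((X;\sigma))$ and $T \colonequals R[[X;\sigma]]$. \autoref{thm:power-hilbert} already gives that $T$ is right Noetherian under the present hypotheses, so it suffices to transfer the ascending chain condition from $T$ up to $S$ via the correspondence $I \mapsto I \cap T$.

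Concretely, the first step is to establish the Laurent-series analog of \autoref{prop:skew-ideals}: every right ideal $I$ of $S$ satisfies $I = (I \cap T)S$. The key ingredient is that $X$ has a two-sided inverse $X^{-1} \in S$ (the series with $1$ in position $-1$ and $0$ elsewhere); this rests on $\sigma(1)=1$, which yields $\sigma^n(1)=1$ for every $n \in \mathbb{Z}$. Given $p \in I$ of order $m \in \mathbb{Z}$, the element $pX^{-m}$ belongs to $T$ (its order is $\geq 0$) and lies in $I$ since $I$ is a right ideal; hence $pX^{-m} \in I \cap T$. A direct term-by-term check using the multiplication rule $(rX^a)(sX^b) = r\sigma^a(s)X^{a+b}$ together with $\sigma^n(1)=1$ shows $(pX^{-m})X^m = p$, so $p \in (I \cap T)S$. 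The reverse inclusion is immediate.

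Given this correspondence, an ascending chain $I_1 \subseteq I_2 \subseteq \cdots$ of right ideals of $S$ intersects with $T$ to give an ascending chain $I_1 \cap T \subseteq I_2 \cap T \subseteq \cdots$ of right ideals of $T$, which stabilizes by \autoref{thm:power-hilbert} at some index $j$. Applying the formula $I_n = (I_n \cap T)S$ for all $n$ then forces $I_n = I_j$ for all $n \geq j$, so $S$ is right Noetherian.

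The only point requiring real care is that $S$ need not be associative (only $R$ is assumed associative; $\sigma$ is not required to be multiplicative), so the identity $(pX^{-m})X^m = p$ cannot be invoked from a blanket associativity assumption. This, however, reduces to a routine monomial computation using $\sigma^n(1) = 1$ and extends coefficient-wise to Laurent series, since each coefficient of $(pX^{-m})X^m$ involves only a single summand. No substantive obstacle beyond this bookkeeping is anticipated.
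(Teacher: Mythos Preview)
Your argument is correct and takes a genuinely different route from the paper's. The paper proves \autoref{thm:skew-series-hilbert} directly by the leading-coefficient method: it forms the right ideal $J\subseteq R$ of leading coefficients of elements of $I$, chooses $p_1,\ldots,p_n\in I$ whose leading coefficients generate $J$, and then invokes ``the exact same argument as in the proof of \autoref{thm:power-hilbert}'' to show these $p_i$ generate $I$. Your approach instead treats \autoref{thm:power-hilbert} as a black box and transfers right Noetherianity from $T=R[[X;\sigma]]$ to $S=R((X;\sigma))$ via the correspondence $I\mapsto I\cap T$ and the identity $I=(I\cap T)S$, in exact analogy with \autoref{prop:skew-ideals}. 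Your method is more modular and makes the relationship between $T$ and $S$ explicit; the paper's method is more self-contained and, in the Laurent series case, even slightly simpler than the power series case because the availability of $X^{-1}$ removes the need for the inductive minimal-order selection of the $p_i$ used in \autoref{thm:power-hilbert}. Your verification that $(pX^{-m})X^m=p$ via $\sigma^n(1)=1$ is the right way to handle the potential non-associativity, and it is essentially the content of $X^n\in N_r(S)$, which the paper records (for the polynomial case) in \autoref{subsec:non-assoc-ore}.
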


\begin{proof}
Let $R$ be a right Noetherian ring satisfying the conditions of the theorem. Denote the leading coefficient of an element $p\in S\colonequals R((X;\sigma))$ by $c(p)$. 

Let $I$ be an arbitrary non-zero right ideal of $S$. Let $J$ be a set consisting of $0$ and the leading coefficients of non-zero elements in $I$. Then $J$ is a right ideal of $R$ and thus finitely generated. Let $p_1, \ldots, p_n$ be elements in $I$ such that $c(p_1), \ldots, c(p_n)$ generate $J$. By the exact same argument as in the proof of \autoref{thm:power-hilbert}, $p_1, \ldots, p_n$ generate $I$. 
\end{proof}

\begin{problem}
Can one generalize the above two theorems for $R$ non-associative?	
\end{problem}

We note that in a non-associative ring, the right ideal generated by elements $a_1, \ldots , a_n$ is not equal to the set of elements of the form $a_1r_1+ \cdots +a_n r_n$, which causes our proof strategy to fail in that case. 

\begin{problem}
Can one prove a left version of the above two theorems?	
\end{problem}

\section*{Acknowledgements}
We would like to thank the anonymous referee for valuable comments on the manuscript as well as Patrik Lundstr{\"o}m and Stefan Wagner for discussions.

\end{document}